\newcommand{\sig}{\sigma_{\e}}
\newcommand{\Gtau}{\sqrt{G_{\tau}} d\xi d\tau}
\newcommand{\G}{\sqrt{G_{0}} d\xi}
\newcommand{\Gt}{\sqrt{G_{0}} d\xi d\tau}
\newcommand{\intg}{\int_{\Gamma}}
\newcommand{\pa}{\partial}
\newcommand{\intp}{\int^{\e}_{0}\int_{\Gamma}}
\providecommand{\norm}[1]{\l#1\|}
\newcommand{\e}{\varepsilon}
\newcommand{\ct}[1]{\langle {#1}\rangle \lower.3ex\hbox{$_{t}$}}
\newcommand{\lt}[1]{[ {#1}] \lower.3ex\hbox{$_{t}$}}
\newtheorem{thm}{Theorem}[section]
\newtheorem{lem}[thm]{Lemma}
\title{\LARGE{\bf Asymptotic behavior for the principal eigenvalue of a reinforcement problem}\thanks{This research was partially supported by the Grant-in-Aid for Scientific Research (B) (\#26287020) and Challenging Exploratory Research (\#16K13768) of Japan Society for the Promotion of Science.}}
\author{Toshiaki Yachimura\thanks{
Research Center for Pure and Applied Mathematics, Graduate
School of
Information Sciences, Tohoku University, Sendai 980-8579, Japan.
{\em Electronic mail address:}
yachimura@ims.is.tohoku.ac.jp}}
\date{}
\begin{document}
\maketitle

\begin{abstract}
In this paper, we consider the asymptotic behavior for the principal eigenvalue of an elliptic operator with piecewise constant coefficients. This problem was first studied by Friedman in 1980. 
We show how the geometric shape of the interface affects the asymptotic behavior for the principal eigenvalue. This is a refinement of the result by Friedman.  
\end{abstract}

\bigskip

\noindent{2010 {\it Mathematics Subject classification.} 35J20, 49R05}
\bigskip

\noindent {\it Keywords and phrases: eigenvalue problem, two phase, transmission condition, reinforcement problem, domain perturbation} 

\section{Introduction and main result}
In this paper, we study a two-phase eigenvalue problem and we investigate the asymptotic behavior for the principal eigenvalue. First we introduce some notations. Let $\Omega \subset \mathbb{R}^n$ $(n \geqslant 2)$ be a bounded domain with smooth and connected boundary $\Gamma$. For sufficiently small $\e > 0$, put 
\begin{equation*}
\Sigma_{\e} = \left\{ x \in \mathbb{R}^n \,\, | \,\, x = \xi + \tau \nu_{\Gamma}(\xi) \,\,\, \text{for} \,\,\, \xi \in \Gamma, 0 < \tau < \e \right\}, \quad \Omega_{\e} = \Omega \cup \Sigma_{\e} \cup \Gamma,
\end{equation*}
where $\nu_{\Gamma}$ denotes the outward unit normal vector to the $\Gamma$, see Figure $1$. 
We consider the two-phase eigenvalue problem on $\Omega_{\e}$ as follows:
\begin{equation}\label{P}
\begin{cases}
-\mathrm{div} \left(Q_{\e} \nabla \Phi \right) = \lambda \Phi \hspace{-0.1cm} &\text{in} \,\, \Omega_{\e}, \\
\displaystyle \Phi = 0 \, &\text{on} \, \partial \Omega_{\e},
\end{cases}
\end{equation}
where $Q_{\e} = Q_{\e}(x) \left( x \in \Omega_{\e} \right)$ is a piecewise constant function given by 
\begin{equation}
Q_{\e}(x) = \begin{cases}
1, \quad &x \in \Omega, \\
\sig, \quad &x \in \overline{\Sigma}_{\e},
\end{cases}
\end{equation}
where $\sig = \alpha \e$ and $\alpha$ is a positive parameter. 

We consider the problem \eqref{P} in a weak sense, namely, $\lambda \in \mathbb{C}$ is an eigenvalue of \eqref{P} if there exists $\Phi \in H^{1}_{0}(\Omega_{\e})$ such that $\Phi \not \equiv 0$ and 
for any $\varphi \in H^{1}_{0}(\Omega)$, 
\begin{equation}\label{eigenfunction}
\int_{\Omega} \nabla \Phi \cdot \nabla \varphi \, dx + \sig \int_{\Sigma_{\e}} \nabla \Phi \cdot \nabla \varphi \, dx = \lambda \int_{\Omega_{\e}} \Phi \varphi \, dx. 
\end{equation}
By a standard argument of self-adjoint operators, the eigenvalues of \eqref{P} are non-negative real numbers and the set of all eigenvalues is discrete. 
Let $\{ \lambda_{k}(\e) \}_{k\geqslant1}$ be the eigenvalues satisfying $0 < \lambda_{1}(\e) < \lambda_{2}(\e) \leqslant \lambda_{3}(\e) \leqslant \cdots \to +\infty$ and $\{ \Phi_{k,\e} \}_{k\geqslant1}$ be the associated eigenfunctions in \eqref{P} which are assumed to be normalized so that 
\begin{equation*}
\int_{\Omega_{\e}} \abs{\Phi_{k,\e}}^{2}  \, dx = 1. 
\end{equation*}

Since $Q_{\e} = Q_{\e}(x)$ $(x \in \Omega_{\e})$ is a piecewise constant function, we can rewrite \eqref{eigenfunction} as follows:
\begin{equation}\label{pb2}
\begin{cases}
- \Delta \Phi_{1} = \lambda \Phi_{1} \quad &\text{in} \,\, \Omega, \\
- \sig \Delta \Phi_{2} = \lambda \Phi_{2} \quad &\text{in} \,\, \Sigma_{\e}, \\
\Phi_{1} = \Phi_{2} \quad &\text{on} \,\, \Gamma, \\
\vspace{0.15cm}
\displaystyle \frac{\partial \Phi_{1}}{\partial \nu_{\Gamma}} = \sig \frac{\partial \Phi_{2}}{\partial \nu_{\Gamma}} \quad &\text{on} \,\, \Gamma, \\
\displaystyle \Phi_{2} = 0 \quad &\text{on} \,\, \pa \Omega_{\e}.
\end{cases} 
\end{equation}
Here $\Phi_{1}$ and $\Phi_{2}$ are the restriction of the eigenfunction $\Phi$ on $\Omega$ and $\Sigma_{\e}$, respectively. The fourth equality in \eqref{pb2} is usually called {\em transmission condition}, which can be interpreted as the continuity of the flux through the interface $\Gamma$ in \eqref{P}. 
\begin{figure}[H]
\centering
\includegraphics[width=6cm]{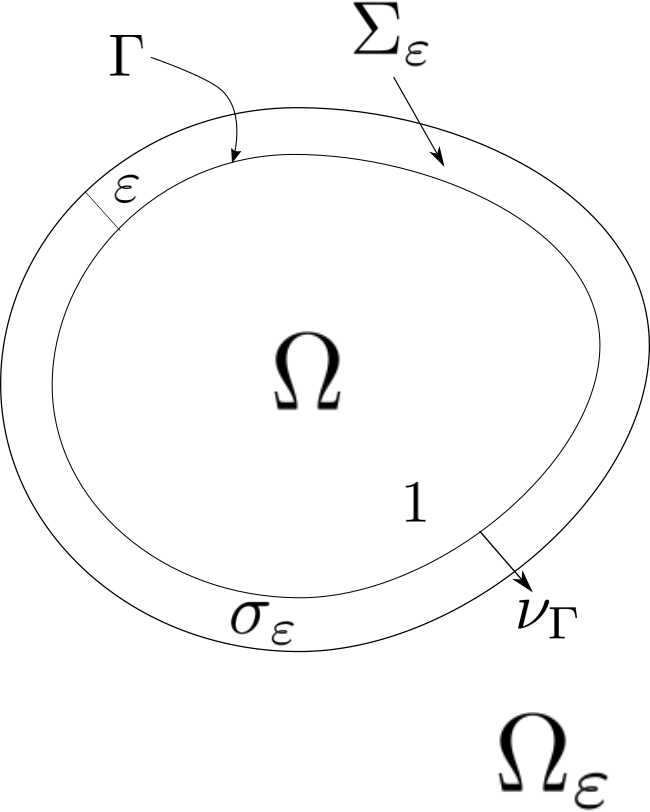}
\caption{Problem setting}
\end{figure}

The purpose of this paper is to study the asymptotic behavior for the principal eigenvalue $\lambda_{1}(\e)$ as $\e \to 0$. In particular, our aim is to show how the geometric shape of the interface $\Gamma$ affects the asymptotic behavior for the principal eigenvalue $\lambda_{1}(\e)$.  In what follows, let the principal eigenfunction $\Phi_{1,\e}$ be denoted by $\Phi_{\e}$ for the sake of simple notation. 

The study of two-phase eigenvalue problems arise in the study of the material science of composite media. In particular, the problems dealt with in this paper are called reinforcement problems or coating problems, and are related to vibration frequencies of composite materials or coating of composite materials with thermal insulation. 

This type of the two-phase eigenvalue problem was first studied by Friedman\cite{Fr}. He considered the two-phase eigenvalue problem for the principal eigenvalue of some elliptic operators in the case $\lim_{\e \to 0} \sig / \e = \alpha$ and $\lim_{\e \to 0} \sig / \e = 0$. His method is based on $H^{2}$-estimate of the eigenfunction. Rosencrans--Wang\cite{RW} generalized Friedman's results to all eigenvalues in the case $\lim_{\e \to 0} \sig / \e = 0$. They only used $H^{1}$-estimate of eigenfunctions which is easily obtained by the variational characterization of the eigenvalues. Regarding other two-phase eigenvalue problems in this direction, we refer to \cite{GLNP}\cite{JKo}\cite{P}. 

In this paper, we treat the case $\lim_{\e \to 0} \sig / \e = \alpha$ and focus on a refinement of Friedman's result. 
Friedman proved the following theorem:
\begin{thm}[Friedman]\label{thm0}
Let $\lambda_{1}(\e)$ be the principal eigenvalue of the eigenvalue problem \eqref{P}. Then we have  
\begin{align*}
\lambda_{1}(\e) &= \mu_{1} + o(1) \,\,\, \text{as} \,\,\, \e \to 0, \\
\Phi_{\e} &\to w_{1} \,\,\, \text{weakly} \,\,\, \text{in} \,\,\, H^{2}(\Omega),  
\end{align*}
where $\mu_{1}$ is the principal eigenvalue and $w_{1}$ is the principal eigenfunction of the following Robin eigenvalue problem:
\begin{equation*}
\begin{cases}
- \Delta w = \mu w \,\, &\text{in} \,\, \Omega, \\
\alpha w + \dfrac{\pa w}{\pa \nu_{\Gamma}} = 0 \, &\text{on} \,\, \Gamma.
\end{cases}
\end{equation*}
\end{thm}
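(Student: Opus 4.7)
The plan is to combine the Rayleigh-quotient characterization of $\lambda_1(\varepsilon)$ with compactness of the restrictions $\Phi_\varepsilon|_\Omega$ and a careful choice of test functions in the thin layer $\Sigma_\varepsilon$. Throughout I use the Fermi coordinates $x=\xi+\tau\nu_\Gamma(\xi)$, whose Jacobian is $1+O(\varepsilon)$. For the upper bound $\limsup_{\varepsilon\to 0}\lambda_1(\varepsilon)\le\mu_1$, I would extend the Robin principal eigenfunction by the linear profile $\widetilde w_1(\xi+\tau\nu_\Gamma(\xi)):=w_1(\xi)(1-\tau/\varepsilon)$, which lies in $H^1_0(\Omega_\varepsilon)$. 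Its normal derivative equals $-w_1(\xi)/\varepsilon$, yielding
$$\sigma_\varepsilon\int_{\Sigma_\varepsilon}|\nabla\widetilde w_1|^2\,dx=\frac{\sigma_\varepsilon}{\varepsilon}\int_\Gamma w_1^2\,d\sigma+O(\varepsilon)\longrightarrow\alpha\int_\Gamma w_1^2\,d\sigma,$$
while $\|\widetilde w_1\|_{L^2(\Sigma_\varepsilon)}^2=O(\varepsilon)$. Inserting $\widetilde w_1$ into the Rayleigh quotient for \eqref{P} and comparing with the Rayleigh quotient of $\mu_1$ gives the inequality.

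Next I would extract a weak limit. From the upper bound together with the normalization, $\|\Phi_\varepsilon\|_{H^1(\Omega)}$ is bounded, and integration along normal rays using $\Phi_\varepsilon=0$ on $\partial\Omega_\varepsilon$ gives $\|\Phi_\varepsilon\|_{L^2(\Sigma_\varepsilon)}^2\le\varepsilon^2\|\nabla\Phi_\varepsilon\|_{L^2(\Sigma_\varepsilon)}^2=O(\varepsilon)$, so $\|\Phi_\varepsilon\|_{L^2(\Omega)}\to 1$. Along a subsequence, $\Phi_\varepsilon\rightharpoonup w$ in $H^1(\Omega)$, strongly in $L^2(\Omega)$ and in $L^2(\Gamma)$, while $\lambda_1(\varepsilon)\to\lambda^*$. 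To identify $(w,\lambda^*)$, I test \eqref{eigenfunction} against $\widetilde\varphi(\xi+\tau\nu_\Gamma(\xi)):=\varphi(\xi)(1-\tau/\varepsilon)$ for $\varphi\in C^\infty(\overline\Omega)$. Integration by parts in $\tau$ on the layer, together with $\Phi_\varepsilon(\xi+\varepsilon\nu_\Gamma(\xi))=0$, converts the layer term into
$$\sigma_\varepsilon\int_{\Sigma_\varepsilon}\nabla\Phi_\varepsilon\cdot\nabla\widetilde\varphi\,dx=\frac{\sigma_\varepsilon}{\varepsilon}\int_\Gamma\Phi_\varepsilon\varphi\,d\sigma+o(1)\longrightarrow\alpha\int_\Gamma w\varphi\,d\sigma,$$
since tangential contributions are bounded by $\sigma_\varepsilon\|\nabla\Phi_\varepsilon\|_{L^2(\Sigma_\varepsilon)}\|\nabla^{\mathrm{tan}}\widetilde\varphi\|_{L^2(\Sigma_\varepsilon)}=O(\varepsilon)\cdot O(\varepsilon^{-1/2})\cdot O(\varepsilon^{1/2})=O(\varepsilon)$. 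The remaining terms pass by weak $H^1$ and strong $L^2$ convergence, yielding the weak Robin equation for $(w,\lambda^*)$. Positivity of $\Phi_\varepsilon$ and $\|w\|_{L^2(\Omega)}=1$ then force $(w,\lambda^*)=(w_1,\mu_1)$ by simplicity of $\mu_1$, and uniqueness of the limit upgrades subsequential to full convergence.

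The main technical obstacle is upgrading this $H^1$ convergence to weak $H^2(\Omega)$ convergence, which requires a uniform $H^2$ bound on $\Phi_\varepsilon|_\Omega$. Since $-\Delta\Phi_\varepsilon=\lambda_1(\varepsilon)\Phi_\varepsilon$ in $\Omega$, standard elliptic regularity reduces this to a uniform $H^{1/2}(\Gamma)$ bound on the Neumann datum $\sigma_\varepsilon\,\partial_{\nu_\Gamma}\Phi_2$. I would attack this by writing $\Phi_2(\xi+\tau\nu_\Gamma(\xi))=\Phi_\varepsilon|_\Gamma(\xi)(1-\tau/\varepsilon)+r_\varepsilon$ and estimating the remainder $r_\varepsilon$ in a weighted Sobolev norm on $\Sigma_\varepsilon$ using the rescaled equation $-\sigma_\varepsilon\Delta\Phi_2=\lambda_1(\varepsilon)\Phi_2$ together with the boundary conditions $r_\varepsilon|_\Gamma=0$ and $r_\varepsilon|_{\partial\Omega_\varepsilon\setminus\Gamma}=-\Phi_\varepsilon|_\Gamma$, suitably lifted. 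This is essentially the $H^2$ analysis Friedman carried out by hand; once the $H^{1/2}(\Gamma)$ trace bound is secured, boundary elliptic regularity on $\Omega$ closes the loop and the subsequence extraction in $H^2(\Omega)$ completes the proof.
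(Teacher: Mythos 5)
Your argument matches the strategy the paper (following Friedman and Rosencrans--Wang) uses in Section 3: the same linear-profile test function $w_1(\xi)(1-\tau/\varepsilon)$ for the Rayleigh-quotient upper bound, the same normal-ray Poincar\'e estimate $\|\Phi_\varepsilon\|_{L^2(\Sigma_\varepsilon)}^2=O(\varepsilon)$, the same test functions $\varphi(\xi)(1-\tau/\varepsilon)$ to pass to the Robin limit equation and identify $(w,\lambda^*)=(w_1,\mu_1)$ via compactness and simplicity of $\mu_1$, and the same reduction of the uniform $H^2(\Omega)$ bound to an $H^{1/2}(\Gamma)$ estimate on the flux $\sigma_\varepsilon\partial_{\nu_\Gamma}\Phi_2$ via the transmission condition, which the paper itself attributes to Brezis--Caffarelli--Friedman and Friedman rather than reproving. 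So the proposal is correct and takes essentially the route the paper takes (and cites).
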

Theorem \ref{thm0} implies that the condition $\lim_{\e \to 0} \sig / \e = \alpha$ affects the boundary condition, which becomes the Robin boundary condition. 

We derive a more precise asymptotic behavior for the principal eigenvalue. In the following we mention the main result of this paper. 
\begin{thm}\label{thm1}
Let $\lambda_{1}(\e)$ be the principal eigenvalue of the eigenvalue problem \eqref{P}. Then we have the asymptotic behavior 
\begin{equation*}
\lambda_{1}(\e) = \mu_{1} - \e \int_{\Gamma} \left( \alpha H + \frac{\mu_{1}}{3} \right)w^{2}_{1} \G + o(\e) \,\,\, \text{as} \,\,\, \e \to 0, 
\end{equation*}
where $H$ is the mean curvature defined as the sum of the principle curvatures of $\Gamma$.   
\end{thm}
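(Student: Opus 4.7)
The plan is a matched asymptotic expansion backed by two-sided bounds from the Rayleigh quotient. Write $\lambda_{1}(\e) = \mu_{1} + \e\mu^{(1)} + o(\e)$ and expand $\Phi_{\e}$ in two regions: on $\Omega$ set $\Phi_{\e} = w_{1} + \e v + o(\e)$ for some corrector $v$, and on the thin shell $\Sigma_{\e}$ pass to the Fermi coordinates $(\xi,\tau)$ already in use and rescale $s = \tau/\e \in (0,1)$, writing $\Phi_{\e}(\xi,\e s) = \Phi_{0}(\xi,s) + \e\Phi_{1}(\xi,s) + o(\e)$. Continuity across $\Gamma$ and the outer Dirichlet condition force $\Phi_{0}(\xi,0) = w_{1}(\xi)$ and $\Phi_{0}(\xi,1) = 0$. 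The geometric input that brings $H$ into the expansion is the Jacobian $\sqrt{G_{\tau}} = \sqrt{G_{0}}(1 + \tau H + O(\tau^{2}))$ together with the corresponding expansion of the Laplacian in tubular coordinates.

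Substituting into $-\sigma_{\e}\Delta\Phi_{\e} = \lambda_{1}(\e)\Phi_{\e}$ and matching powers of $\e$: at order $\e^{-1}$ one has $\partial_{s}^{2}\Phi_{0} = 0$, forcing the linear profile $\Phi_{0}(\xi,s) = w_{1}(\xi)(1 - s)$; at order $\e^{0}$ one obtains a second-order ODE in $s$ for $\Phi_{1}$ whose right-hand side involves $\mu_{1}w_{1}(1 - s)$ and $H w_{1}$. Integrating with $\Phi_{1}(\xi,1) = 0$ and the matching condition $\Phi_{1}(\xi,0) = v|_{\Gamma}$, and then expanding the transmission condition $\partial_{\nu}\Phi_{\e}|_{\Omega} = \sigma_{\e}\partial_{\tau}\Phi_{\e}|_{\Sigma_{\e}}$ at order $\e$, produces an inhomogeneous Robin condition $\partial_{\nu}v + \alpha v = g$ on $\Gamma$, with $g(\xi)$ a linear combination of $H w_{1}$ and $\mu_{1}w_{1}$ whose coefficients are read off from $\Phi_{1}(\xi,0)$ and $\partial_{s}\Phi_{1}(\xi,0)$.

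The equation $-\Delta v - \mu_{1}v = \mu^{(1)}w_{1}$ in $\Omega$ sits at the Robin eigenvalue $\mu_{1}$, so the Fredholm alternative gives the solvability identity $\mu^{(1)}\int_{\Omega}w_{1}^{2}\,dx = \int_{\Gamma}\bigl(v\,\partial_{\nu}w_{1} - w_{1}\,\partial_{\nu}v\bigr)\,\G$. Inserting the Robin condition $\partial_{\nu}w_{1} + \alpha w_{1} = 0$ and the inhomogeneous condition for $v$, the $v$-dependent boundary terms cancel and $\mu^{(1)}$ emerges as an explicit boundary integral of $w_{1}^{2}$ against the coefficients of $H w_{1}$ and $\mu_{1}w_{1}$ in $g$ — that is, the boundary integral in the statement of the theorem.

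To upgrade the formal expansion to a rigorous $o(\e)$ statement I would derive matching upper and lower bounds. The upper bound follows cleanly from the Rayleigh quotient with trial function $\tilde\Phi = w_{1}$ on $\Omega$ continued as $w_{1}(\xi)(1 - \tau/\e)$ on $\Sigma_{\e}$: expanding numerator and denominator via the $\sqrt{G_{\tau}}$-expansion and using $\int_{0}^{\e}(1 - \tau/\e)^{2}\,d\tau = \e/3$ generates the $\e$-coefficient, combining the $\alpha H$ contribution from the numerator with the $\mu_{1}/3$ contribution from the boundary-layer mass in the denominator. The matching lower bound is the main obstacle: I would test the Robin Rayleigh quotient for $\mu_{1}$ against $\Phi_{\e}|_{\Omega}$ (renormalized, since $\int_{\Omega_{\e}}\Phi_{\e}^{2} = 1$ differs from $\int_{\Omega}\Phi_{\e}^{2}$ by the boundary-layer mass) and control the $\alpha\int_{\Gamma}\Phi_{\e}^{2}\,\G$ boundary term via Friedman's weak-$H^{2}$ convergence, together with a quantitative $O(\e)$ improvement showing that $\Phi_{\e}$ on $\Sigma_{\e}$ really is well-approximated by the linear profile $w_{1}(\xi)(1 - \tau/\e)$ up to an $o(\e)$ remainder in an $H^{1}$-type norm.
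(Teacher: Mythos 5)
Your plan is built around a formal matched asymptotic expansion closed by a Fredholm solvability condition, with rigour coming from separately derived upper and lower bounds. The paper does something different: it never introduces an interior corrector $v$ or a rescaled inner expansion. Instead it tests the weak form \eqref{eigenfunction} against $\varphi = \zeta\phi$ with $\phi$ the affine cut-off, expands each term using $\sqrt{G_\tau} = \sqrt{G_0}(1 - H\tau) + O(\tau^2)$, takes $\zeta = c_1 w_1$, and reads the $O(\e)$ coefficient off directly from the resulting identity combined with $c_1 = 1 + o(1)$ (Lemma \ref{key lemma1}). The ingredients that make this rigorous are exactly the ones your sketch glosses over: Lemma \ref{key lemma2}, an $L^2$ bound on the tangential gradient of $\Phi_\e$ in $\Sigma_\e$; and above all Lemma \ref{key lemma3}, the quantitative trace estimate $\|\Phi_{1,\e} + \alpha^{-1}\partial_\nu\Phi_{1,\e}\|_{L^2(\Gamma)} = O(\e^{1/2})$, which is proved by integrating the Taylor identity \eqref{formula} across the layer, invoking the Dirichlet condition at the outer boundary and the transmission condition at $\Gamma$, and controlling the remainder via the uniform $H^2$ bound \eqref{H^{2}}. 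This is the step that replaces $\partial_\nu\Phi_{1,\e}$ by $-\alpha\Phi_{1,\e}$ with a controllable error, and it is exactly where the $\tfrac{\e}{2} - \tfrac{\e}{6} = \tfrac{\e}{3}$ cancellation producing the $\mu_1/3$ coefficient comes from. Your proposal has no counterpart to this: ``control the $\alpha\int_\Gamma\Phi_\e^2$ boundary term via Friedman's weak-$H^2$ convergence, together with a quantitative $O(\e)$ improvement'' is precisely the missing lemma, not a route to proving it.

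There is also a concrete problem with the step you describe as following ``cleanly.'' Using the trial function $\tilde\Phi = w_1(\xi)(1-\tau/\e)$ on $\Sigma_\e$, the normal-derivative part of the numerator contributes
\[
\sigma_\e\int_{\Sigma_\e}\frac{w_1^2}{\e^2}\sqrt{G_\tau}\,d\xi\,d\tau
= \alpha\int_\Gamma w_1^2\sqrt{G_0}\,d\xi - \frac{\alpha\e}{2}\int_\Gamma H\,w_1^2\sqrt{G_0}\,d\xi + O(\e^2),
\]
so the Rayleigh quotient of this trial function is $\mu_1 - \e\bigl(\tfrac{\alpha}{2}\int_\Gamma Hw_1^2\sqrt{G_0}\,d\xi + \tfrac{\mu_1}{3}\int_\Gamma w_1^2\sqrt{G_0}\,d\xi\bigr) + O(\e^2)$, with coefficient $\alpha/2$ rather than $\alpha$ on the curvature term. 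So the simple trial function does not by itself deliver the $\alpha H$ coefficient of the statement; a test function incorporating the next-order correction in the layer (precisely the $\Phi_1$ of your inner expansion) would be needed to make the upper bound match, and you would have to carry that construction out explicitly. In short: the formal expansion is a plausible way to guess the answer, but neither half of your ``two-sided bound'' sketch closes the argument, whereas the paper closes it through the weak-form identity plus Lemmas \ref{key lemma2} and \ref{key lemma3}.
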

From Theorem \ref{thm1}, we see that the effect of the geometric shape of the interface $\Gamma$ appears in the second term of the asymptotic behavior for the principal eigenvalue. 

The outline of the proof of Theorem \ref{thm1} is as follows: first, we derive an upper bound of the principal eigenvalue by using a variational approach which is based on \cite{RW}. Next, we derive a lower bound of the principal eigenvalue by using the upper bound and the Fourier expansion with respect to eigenfunctions of a Robin eigenvalue problem. Then we have the asymptotic behavior for the principal eigenvalue. Once the asymptotic behavior is obtained, we can use it to show an $L^{2}$-estimate for the tangential components of the principal eigenfunction in $\Sigma_{\e}$. This is necessary to control the behavior of the principal eigenfunction in the thin layer $\Sigma_{\e}$. By using this estimate, $H^{2}$-estimate, and transmission condition, we finally prove Theorem \ref{thm1}. 

The following sections are organized as follows: in section \ref{pre}, we give some geometric preliminaries concerning the thin layer $\Sigma_{\e}$. In section \ref{firstasmp}, we prove the asymptotic behavior for the principal eigenvalue. In section \ref{prth1}, based on the results of section \ref{firstasmp}, we prove Theorem \ref{thm1}. 

\section{Geometric preliminaries}\label{pre}
We present some geometric preliminaries of thin layer $\Sigma_{\e}$. Every $x \in \Sigma_{\e}$ can be represented by 
\begin{equation}\label{x}
x = \xi + \tau \nu_{\Gamma}(\xi), \quad \xi \in \Gamma, \,\, 0 < \tau < \e.
\end{equation}
We introduce a local coordinate system $(\xi_{1}, \xi_{2}, \cdots , \xi_{n-1}, \xi_{n}) = (\xi_{1}, \xi_{2}, \cdots , \xi_{n-1}, \tau)$ for $\Gamma \times (0, \e)$ and let $g = \left( g_{ij}(\xi,\tau) \right)$ denote the metric tensor associated with it. Then from \eqref{x}, $g_{ij}(\xi,\tau)$ is given by 
\begin{equation}\label{asymp1}
g_{ij}(\xi,\tau) = \begin{cases}
g_{0,ij}(\xi) + \tau \widetilde{g}_{0,ij}(\xi) + \tau^{2}\widehat{g}_{0,ij}(\xi) &\text{if} \quad 1\leqslant i,j \leqslant n-1, \\
0 &\text{if} \quad i = n, j \neq n \,\, \text{or}\,\, i \neq n, j = n, \\
1 &\text{if} \quad i , j = n,
\end{cases}
\end{equation}
where $g_{0} = \left( g_{0, ij}(\xi) \right)$ denotes the Riemannian metric associated with the local coordinates $(\xi_{1}, \xi_{2}, \cdots , \xi_{n-1})$ and we put
\begin{equation*}
\widetilde{g}_{0,ij} = \left(\frac{\pa}{\pa \xi_{i}}, \frac{\pa \nu_{\Gamma}}{\pa \xi_{j}}\right) + \left(\frac{\pa}{\pa \xi_{j}}, \frac{\pa \nu_{\Gamma}}{\pa \xi_{i}}\right), \quad 
\widehat{g}_{0,ij} = \left(\frac{\pa \nu_{\Gamma}}{\pa \xi_{i}}, \frac{\pa \nu_{\Gamma}}{\pa \xi_{j}}\right).
\end{equation*}
Here $\pa / \pa \xi_{i}$ and $\pa / \pa \xi_{j}$ are tangent vectors on $\xi \in \Gamma$ and $(\cdot , \cdot)$ is the Euclidean inner product. Let $(b_{ij})_{1\leqslant i,j \leqslant n-1}$ denote the coefficients of the second fundamental form on $\Gamma$. In the local coordinate, $b_{ij} = \left(\pa^{2} / \pa \xi_{i} \pa \xi_{j}, \nu_{\Gamma} \right)$.
By the definition of $\widetilde{g}_{0,ij}$, we have $\widetilde{g}_{0,ij} = -2b_{ij}$. 
Also we denote the inverse matrix of $(g_{ij})$ by $(g^{ij})$ and put $G = \det (g_{ij})$. By using this local coordinates we can express the norm of the gradient of $\Phi$ as follows: 
\begin{equation}
\abs{\nabla_{x} \Phi}^{2} = \sum^{n}_{i,j = 1} g^{ij} \frac{\pa \Phi}{\pa \xi_{i}} \frac{\pa \Phi}{\pa \xi_{j}} = \abs{\nabla_{\tan} \Phi}^{2} + \left( \frac{\pa \Phi}{\pa \tau} \right)^{2}, \label{norm} 
\end{equation}
where $\abs{\nabla_{\tan} \Phi}^{2} = \sum^{n-1} _{i,j=1} g^{ij} \pa \Phi / \pa \xi_{i} \pa \Phi / \pa \xi_{j}$. Moreover, by \eqref{asymp1} we can obtain the following asymptotic formula for $\sqrt{G}$: 
\begin{equation}
\sqrt{G(\xi,\tau)} = \sqrt{G(\xi,0)}(1 - H(\xi) \tau) + O(\tau^{2}) \,\,\, \text{as} \,\,\, \tau \to 0, \label{formula0}
\end{equation}
where $H(\xi)$ is the mean curvature at $\xi \in \Gamma$ with respect to $\nu_{\Gamma}$ (defined as the sum of the principle curvatures of $\Gamma$). 
The asymptotic formula \eqref{formula0} will play an important role in obtaining the asymptotic behavior for the principal eigenvalue $\lambda_{1}(\e)$. 
For the details about the geometric property of a thin layer, see \cite{JK}\cite{S}\cite{Y} and the references given there. 

In the following sections, $\sqrt{G(\xi, \tau)}$ will be denoted by $\sqrt{G_{\tau}}$ for simplicity and C will be used to represent any positive constant independent of $\e$. The same letter $C$ will be used to denote different constants.

\section{Asymptotic behavior for $\lambda_{1}(\e)$}\label{firstasmp}
\subsection{Upper bound of $\lambda_{1}(\e)$}
By the $\min$-$\max$ principle, 
\begin{equation}\label{minmax}
\lambda_{1}(\e) = \inf_{u \in H^{1}_{0}(\Omega), \, u \neq 0} \frac{\displaystyle \int_{\Omega} \abs{\nabla u}^{2} dx + \sig \int_{\Sigma_{\e}} \abs{\nabla u}^{2} dx}{\displaystyle \int_{\Omega_{\e}} \abs{u}^{2} dx}. 
\end{equation}
We construct a test function in order to estimate the principal eigenvalue $\lambda_{1}(\e)$. We extend the normalized Robin principal eigenfunction $w_{1} = w_{1}(x)$ ($x \in \Omega$) along $\nu_{\Gamma}$ to $\Sigma_{\e}$ by setting $w_{1}(\xi, \tau) = w_{1}(\xi)$ for every $\xi \in \Gamma$. 
Also we put
\begin{equation*}
\phi(x) = \begin{cases}
1 \quad \text{in} \,\, \Omega, \\
1 - \dfrac{\tau}{\e} \quad \text{in} \,\, \overline{\Sigma}_{\e}.
\end{cases}
\end{equation*}
Taking $\tilde{u} = w_{1} \phi$ as a test function in \eqref{minmax}, we obtain 
\begin{equation*}
\lambda_{1}(\e) \leqslant \frac{\displaystyle \int_{\Omega} \abs{\nabla \tilde{u}}^{2} dx + \sig \int_{\Sigma_{\e}} \abs{\nabla \tilde{u}}^{2} dx}{\displaystyle \int_{\Omega} \abs{\tilde{u}}^{2} dx} = \frac{\displaystyle \int_{\Omega} \abs{\nabla w_{1}}^{2} dx + \sig \int_{\Sigma_{\e}} \abs{\nabla(w \phi)}^{2} dx}{\displaystyle \int_{\Omega} \abs{w_{1}}^{2} dx + \int_{\Sigma_{\e}} \abs{w_{1} \phi}^{2} dx}. 
\end{equation*}

By using the normalization $\int_{\Omega} \abs{w_{1}}^{2} dx = 1$ and $\int_{\Sigma_{\e}} \abs{w_{1} \phi}^{2} dx = O(\e)$, we have 
\begin{equation*}
\int_{\Omega} \abs{w_{1}}^{2} dx + \int_{\Sigma_{\e}} \abs{w_{1} \phi}^{2} dx = 1 + O(\e). 
\end{equation*} 
Also we have $\nabla w_{1} \cdot \nabla \phi = 0$ since $w_{1}$ and $\phi$ only depend on $\xi$ and $\tau$ in $\Sigma_{\e}$, respectively. Hence,    
\begin{align*}
\sig \int_{\Sigma_{\e}} \abs{\nabla(w_{1} \phi)}^{2} dx &= \sig \int_{\Sigma_{\e}} \left( \phi^{2} \abs{\nabla w_{1}}^{2} + 2 \nabla w_{1} \cdot \nabla \phi + w_{1}^{2} \abs{\nabla \phi}^{2} \right) dx \\
&= \sig \int_{\Sigma_{\e}} \phi^{2} \abs{\nabla w_{1}}^{2} dx + \sig \int_{\Sigma_{\e}} w_{1}^{2} \abs{\nabla \phi}^{2} dx \\
&= \sig \int_{\Sigma_{\e}} w_{1}^{2} \abs{\nabla \phi}^{2} dx + O(\e^{2}). 
\end{align*}
We note that $\nabla \phi = \frac{\pa \phi}{\pa \tau} \nu_{\Gamma} = - \nu_{\Gamma}/\e$. By using the asymptotic formula \eqref{formula0}, we get 
\begin{align*}
\sig \int_{\Sigma_{\e}} w_{1}^{2} \abs{\nabla \phi}^{2} dx &= \alpha \e \intp w_{1}(\xi)^{2} \cdot \abs{\frac{-\nu_{\Gamma}}{\e}}^{2} \Gtau \\ 
&= \alpha \e \cdot \frac{1}{\e^{2}}\intp w_{1}(\xi)^{2} (1 + O(1) \tau) \Gt \\
&= \alpha \intg w_{1}^{2} \G + O(\e).
\end{align*}
Thus, 
\begin{align*}
\lambda_{1}(\e) &\leqslant \frac{\displaystyle \int_{\Omega} \abs{\nabla \tilde{u}}^{2} dx + \sig \int_{\Sigma_{\e}} \abs{\nabla \tilde{u}}^{2} dx}{\displaystyle \int_{\Omega} \abs{\tilde{u}}^{2} dx} \\
&\leqslant \frac{\displaystyle \int_{\Omega} \abs{\nabla w_{1}}^{2} dx + \alpha \intg w_{1}^{2} \G + C\e}{1 - C\e} \leqslant \mu_{1} + C\e. 
\end{align*}
Therefore we obtain the following upper bound of the principal eigenvalue $\lambda_{1}(\e)$:  
\begin{equation}\label{upper bound}
\lambda_{1}(\e) \leqslant \mu_{1} + C\e.  
\end{equation}

\subsection{Lower bound of $\lambda_{1}(\e)$}
Recall the weak form \eqref{eigenfunction}: for any $\varphi \in H^{1}_{0}(\Omega)$, 
\begin{equation*}
\int_{\Omega} \nabla \Phi_{\e} \cdot \nabla \varphi \, dx + \sig \int_{\Sigma_{\e}} \nabla \Phi_{\e} \cdot \nabla \varphi \, dx = \lambda_{1}(\e) \int_{\Omega_{\e}} \Phi_{\e} \varphi \, dx. 
\end{equation*}
First of all, we mention that we can get the following $H^{1}$ and $H^{2}$-estimates of the principal eigenfunction $\Phi_{\e}$ by using the upper bound \eqref{upper bound}. 
\begin{lem}\label{key lemma0}
The principal eigenfunction $\Phi_{\e}$ satisfies 
\begin{align}
&\int_{\Omega} \abs{\nabla \Phi_{\e}}^{2} \, dx + \sig \int_{\Sigma_{\e}} \abs{\nabla \Phi_{\e}}^{2} \, dx \leqslant C, \label{H^{1}} \\
&\int_{\Omega} \abs{D^{2} \Phi_{\e}}^{2} \, dx + \sig \int_{\Sigma_{\e}} \abs{D^{2} \Phi_{\e}}^{2} \, dx \leqslant C \label{H^{2}}
\end{align}
for a positive constant $C$ independent of $\e$. 
\end{lem}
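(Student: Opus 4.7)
The $H^1$ estimate \eqref{H^{1}} is immediate. Choosing $\varphi = \Phi_\e$ in the weak form \eqref{eigenfunction} and using the normalization $\int_{\Omega_\e} |\Phi_\e|^2 dx = 1$ yields
$$\int_{\Omega} |\nabla \Phi_\e|^2 dx + \sig \int_{\Sigma_\e} |\nabla \Phi_\e|^2 dx \;=\; \lambda_1(\e) \;\leq\; \mu_1 + C\e,$$
where the last inequality is the upper bound \eqref{upper bound}.

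For the $H^2$ estimate \eqref{H^{2}}, the plan is to apply elliptic regularity separately to the restrictions $\Phi_1, \Phi_2$ of $\Phi_\e$ to $\Omega$ and $\Sigma_\e$, and to couple the two estimates through the transmission conditions in \eqref{pb2}. A preliminary observation is that the Laplacians on each side are already controlled in $L^2$: from $-\Delta \Phi_1 = \lambda_1(\e)\Phi_1$ one gets $\|\Delta \Phi_1\|_{L^2(\Omega)} \leq C$ directly; on $\Sigma_\e$, using the Dirichlet condition $\Phi_2 = 0$ on $\pa\Omega_\e$ together with a Poincar\'e inequality in the normal direction $\tau \in (0,\e)$ and \eqref{H^{1}} gives $\|\Phi_2\|_{L^2(\Sigma_\e)}^2 = O(\e)$, whence $\sig \|\Delta \Phi_2\|_{L^2(\Sigma_\e)}^2 = \lambda_1(\e)^2 \sig^{-1} \|\Phi_2\|_{L^2(\Sigma_\e)}^2 \leq C$. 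On $\Omega$, whose geometry is fixed, boundary $H^2$-regularity for the Neumann-type problem with datum $\pa\Phi_1/\pa\nu_\Gamma = \sig \pa\Phi_2/\pa\nu_\Gamma$ upgrades the Laplacian bound to a full Hessian bound; on the thin layer $\Sigma_\e$, the natural device is to rescale the normal variable $\tau = \e s$ to the fixed reference cylinder $\Gamma \times (0,1)$, apply $H^2$-regularity there with $\e$-independent constants, and scale back, at which point the volume element contributes exactly the factor $\sig$ appearing on the left-hand side of \eqref{H^{2}}.

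The main obstacle is keeping all constants independent of $\e$ when gluing the two regularity estimates across $\Gamma$: a priori the Neumann datum $\sig \pa\Phi_2/\pa\nu_\Gamma$ is only controlled in $H^{-1/2}(\Gamma)$ through \eqref{H^{1}}, which is one derivative short of what boundary $H^2$-regularity on $\Omega$ requires. The standard remedy is a tangential-difference-quotient argument applied to the coupled transmission problem: the differences $D^h_\xi \Phi_\e$ along $\Gamma$ satisfy a transmission problem of the same form with the same coefficients, so the $H^1$-estimate \eqref{H^{1}} applied to $D^h_\xi \Phi_\e$ produces tangential $H^2$-control of $\Phi_\e$ with $\e$-independent constants, and the remaining pure normal derivatives $\pa_\tau^2 \Phi_1, \pa_\tau^2 \Phi_2$ are then recovered algebraically from the equations in \eqref{pb2}. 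This closes the loop and gives \eqref{H^{2}}.
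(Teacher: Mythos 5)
Your $H^1$ argument is the same one-line computation the paper uses: test the weak form \eqref{eigenfunction} with $\varphi=\Phi_\e$, use the normalization, and invoke the upper bound \eqref{upper bound}. For the $H^2$ bound the paper gives no proof at all, citing Brezis--Caffarelli--Friedman \cite{BCF} and Friedman \cite{Fr}; the mechanism you outline -- tangential difference quotients applied to the coupled transmission problem to gain $\e$-uniform tangential regularity near $\Gamma$, recovery of $\pa_\tau^2\Phi$ from the equations, and rescaling $\tau=\e s$ to a fixed reference cylinder to handle the thin layer -- is precisely the method those references carry out, so your approach coincides with the paper's implicit one, though like the paper you stop at a sketch rather than a complete argument.
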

\begin{proof}
It is easy to show the estimate \eqref{H^{1}} by taking $\varphi = \Phi_{\e}$ in \eqref{eigenfunction} and using the upper bound \eqref{upper bound}. 
The $H^{2}$-estimate \eqref{H^{2}} derives from a boundary estimate on $\Gamma$, which was first established by Brezis--Caffarelli--Friedman\cite{BCF} in the case of two-phase elliptic equations. Friedman\cite{Fr} proved the $H^{2}$-estimate \eqref{H^{2}} by using a similar method. Thus we omit this proof.
\end{proof}
We take any $\zeta \in C^{1}(\overline{\Omega})$. Let us extend $\zeta$ along $\nu_{\Gamma}$ to $\Sigma_{\e}$ by $\zeta(\xi,\tau) = \zeta(\xi)$ for every $\xi \in \Gamma$. We take $\varphi = \zeta\phi$ as a test function in \eqref{eigenfunction}, then we have
\begin{align*}
&\int_{\Omega} \nabla \Phi_{\e} \cdot \nabla \zeta \, dx + \sig \int_{\Sigma_{\e}} \phi \nabla \Phi_{\e} \cdot \nabla \zeta \, dx + \sig \int_{\Sigma_{\e}} \zeta \nabla \Phi_{\e} \cdot \nabla \phi \, dx \\
&\qquad \qquad \qquad \qquad \qquad \qquad \qquad = \lambda_{1}(\e) \int_{\Omega} \Phi_{\e} \phi \zeta \, dx + \lambda_{1}(\e) \int_{\Sigma_{\e}} \Phi_{\e} \phi \zeta \, dx. 
\end{align*}
The second term on the left-hand side and the second term on the right-hand side are $O(\e)$. Indeed, for any $\zeta\in C^{1}(\overline{\Omega})$, by using the $H^{1}$-estimate \eqref{H^{1}} we have 
\begin{align*}
\abs{\sig \int_{\Sigma_{\e}} \phi \nabla \Phi_{\e} \cdot \nabla \zeta \, dx} &\leqslant \int_{\Sigma_{\e}} \abs{\sig^{1/2} \nabla \Phi_{\e}} \cdot \abs{\sig^{1/2} \nabla \zeta} dx \\
&\leqslant \left( \sig \int_{\Sigma_{\e}} \abs{\nabla \Phi_{\e}}^{2} dx \right)^{1/2} \left( \sig \int_{\Sigma_{\e}} \abs{\nabla \zeta}^{2} dx \right)^{1/2} \\
&\leqslant C\e. 
\end{align*}
By using the upper bound of $\lambda_{1}(\e)$, we also have
\begin{align*}
\abs{\lambda_{1}(\e) \int_{\Sigma_{\e}} \Phi_{\e} \phi \zeta \, dx} &\leqslant C \int_{\Sigma_{\e}} \abs{\Phi_{\e}} \abs{\zeta} dx \\
&\leqslant C \left( \int_{\Sigma_{\e}} \abs{\Phi_{\e}}^{2} dx \right)^{1/2} \left( \int_{\Sigma_{\e}} \abs{\zeta}^{2} dx \right)^{1/2}. 
\end{align*}
Now we need to estimate $\int_{\Sigma_{\e}} \abs{\Phi_{\e}}^{2} dx$. By the Dirichlet boundary condition on $\pa \Omega_{\e}$, we get
\begin{equation}\label{keyeq1}
\Phi_{\e}(\xi,\tau) = - \int^{\e}_{\tau} \dfrac{\pa \Phi_{\e}}{\pa \tau} ds. 
\end{equation}
This identity implies that
\begin{equation}\label{ineq1}
\abs{\Phi_{\e}}^{2} \leqslant \left( \int^{\e}_{0} \abs{\dfrac{\pa \Phi_{\e}}{\pa \tau}} ds \right)^{2} \leqslant \e \int^{\e}_{0} \abs{\dfrac{\pa \Phi_{\e}}{\pa \tau}}^{2} ds. 
\end{equation}
Thus we have 
\begin{equation}\label{ineq2}
\int_{\Sigma_{\e}} \abs{\Phi_{\e}}^{2} dx \leqslant \e \int_{\Sigma_{\e}} \left( \int^{\e}_{0} \abs{\dfrac{\pa \Phi_{\e}}{\pa \tau}}^{2} ds \right) dx \leqslant C \e \left( \sig \int_{\Sigma_{\e}} \abs{\nabla \Phi_{\e}}^{2} dx \right). 
\end{equation}
Therefore we obtain the following estimate: 
\begin{equation*}
\abs{\lambda_{1}(\e) \int_{\Sigma_{\e}} \Phi_{\e} \phi \zeta \, dx} \leqslant C\e. 
\end{equation*}
Note that $\nabla \phi = \frac{\pa \phi}{\pa \tau} \nu_{\Gamma} = - \nu_{\Gamma}/\e$ and, by using the asymptotic formula \eqref{formula0}, we have 
\begin{align*}
\sig \int_{\Sigma_{\e}} \zeta \nabla \Phi_{\e} \cdot \nabla \phi \, dx &= \sig \int_{\Sigma_{\e}} \zeta \nabla \Phi_{\e} \cdot \left( - \frac{\nu_{\Gamma}}{\e} \right) \, dx \\
&=  - \alpha \intp \zeta \frac{\pa \Phi_{\e}}{\pa \tau} \Gtau \\
&= - \alpha \intp \zeta \frac{\pa \Phi_{\e}}{\pa \tau} \left(1 + O(1) \tau \right) \Gt \\
&= \alpha \intg \Phi_{\e} \zeta \G + O(\e).  
\end{align*}
Therefore, we obtain
\begin{equation}\label{limiteq1}
\int_{\Omega} \nabla \Phi_{\e} \cdot \nabla \zeta dx + \alpha \intg \Phi_{\e} \zeta \G = \lambda_{1}(\e) \int_{\Omega} \Phi_{\e} \zeta dx + O(\e). 
\end{equation}

We consider the Fourier expansions of $\Phi_{\e}$ with respect to the orthonormal basis given by the eigenfunctions of the following Robin eigenvalue problem:  
\begin{equation}\label{robin}
\begin{cases}
- \Delta w = \mu w \,\, &\text{in} \,\, \Omega, \\
\alpha w + \dfrac{\pa w}{\pa \nu_{\Gamma}} = 0 \, &\text{on} \,\, \Gamma.
\end{cases}
\end{equation}
Let $\{ \mu_{k} \}_{k\geqslant1}$ be the eigenvalues corresponding to the problem \eqref{robin} ordered so that they satisfy $0 < \mu_{1} \leqslant \mu_{2} \leqslant \mu_{3} \leqslant \cdots \to +\infty$ and $\{ w_{k} \}_{k\geqslant1}$ be the associated eigenfunctions which are assumed to be normalized so that 
\begin{equation*} 
\int_{\Omega} \abs{w_{k}}^{2} dx = 1. 
\end{equation*}
Then $\Phi_{\e}$ admits the following the Fourier expansions in $H^{1}(\Omega)$: 
\begin{equation}\label{Fourier expansions}
\Phi_{\e} = \sum_{k \geqslant 1} c_{k}(\e) w_{k}, \quad c_{k} = \int_{\Omega} \Phi_{\e} w_{k} dx.
\end{equation}

Taking $\zeta = c_{1} w_{1}$ in \eqref{limiteq1} and using the orthogonality of the Robin eigenfunctions $\{ w_{k} \}_{k\geqslant1}$, we have 
\begin{equation}\label{above}
(c_{1})^{2} \mu = \lambda_{1}(\e) (c_{1})^{2} + O(\e). 
\end{equation}
From the estimate \eqref{above}, it will be sufficient to show the following lemma to get the lower bound of $\lambda_{1}(\e)$. 
\begin{lem}\label{key lemma1}
The following estimate holds:
\begin{equation}\label{lemma1}
c_{1}(\e) = 1 + o(1) \,\,\, \text{as} \,\,\, \e \to 0.
\end{equation}
\end{lem}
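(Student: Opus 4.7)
The plan is to exploit the $H^2$ bound of Lemma \ref{key lemma0} to secure strong $L^2$ compactness of $\{\Phi_\e\}$ in $\Omega$, pass to the limit in the variational identity \eqref{limiteq1}, and identify the resulting weak limit as $w_1$; once this identification is in place,
\[
c_1(\e) = \int_\Omega \Phi_\e w_1 \, dx \longrightarrow \int_\Omega w_1^2 \, dx = 1
\]
by the continuity of the $L^2$ inner product, and \eqref{lemma1} follows.

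Concretely, I would start from the uniform bound $\|\Phi_\e\|_{H^2(\Omega)} \leqslant C$ coming from Lemma \ref{key lemma0}, extract a subsequence along which $\Phi_{\e_n} \rightharpoonup \Phi_0$ weakly in $H^2(\Omega)$, and use Rellich--Kondrachov to upgrade this to strong convergence in $H^1(\Omega)$ and $L^2(\Omega)$, together with convergence of the traces in $L^2(\Gamma)$. The upper bound \eqref{upper bound} makes the numerical sequence $\lambda_1(\e_n)$ bounded, so after passing to a further subsequence $\lambda_1(\e_n) \to \lambda_*$ for some $\lambda_* \in [0,\mu_1]$. Taking $\e = \e_n \to 0$ in \eqref{limiteq1} for every $\zeta \in C^1(\overline{\Omega})$ then yields
\[
\int_\Omega \nabla \Phi_0 \cdot \nabla \zeta \, dx + \alpha \intg \Phi_0 \zeta \G = \lambda_* \int_\Omega \Phi_0 \zeta \, dx,
\]
which is exactly the weak formulation of the Robin problem \eqref{robin}, so $\lambda_*$ is a Robin eigenvalue and $\Phi_0$ a corresponding eigenfunction.

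It then remains to show that $\Phi_0 \not\equiv 0$ and to pin down which eigenfunction it is. The estimate \eqref{ineq2} gives $\int_{\Sigma_\e} |\Phi_\e|^2 \, dx = O(\e)$, and combined with the $L^2(\Omega_\e)$ normalization of $\Phi_\e$ this forces $\|\Phi_0\|_{L^2(\Omega)} = 1$. Since the principal eigenfunction can be chosen positive, $\Phi_\e > 0$ in $\Omega_\e$, so $L^2$ convergence along an a.e.\ convergent subsequence gives $\Phi_0 \geqslant 0$ a.e. By the simplicity of $\mu_1$ and the strict positivity of its eigenfunction, the only nonnegative normalized Robin eigenfunction is $w_1$, and therefore $\lambda_* = \mu_1$ and $\Phi_0 = w_1$. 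The limit being independent of the extracted subsequence, the whole family converges and \eqref{lemma1} is proved.

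The main obstacle is precisely the identification $\Phi_0 = w_1$: compactness together with the limiting variational identity only shows that $\Phi_0$ is \emph{some} Robin eigenfunction, and ruling out the higher modes requires both the $O(\e)$ mass estimate on the thin layer $\Sigma_\e$ (for the nontriviality $\|\Phi_0\|_{L^2(\Omega)} = 1$) and the positivity plus simplicity of the principal Robin eigenvalue (for the actual identification with $w_1$).
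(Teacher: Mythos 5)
Your argument follows the paper's proof essentially verbatim: compactness from Lemma~\ref{key lemma0}, passage to the limit in \eqref{limiteq1} to identify the weak limit as a Robin eigenpair, and the positivity of $\Phi_\e$ together with the simplicity of $\mu_1$ to pin the limit down to $w_1$, after which strong $L^2(\Omega)$ convergence gives $c_1(\e)\to 1$. The only difference is that you are more explicit about the nontriviality of the limit (via the $O(\e)$ mass estimate on $\Sigma_\e$) and about the subsequence-independence, details the paper takes for granted; the paper also contents itself with the $H^1$ bound and the compactness of the trace map, whereas you invoke the stronger $H^2$ bound, which is available but not needed here.
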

\begin{proof}
From Lemma \ref{key lemma0}, we obtain the $H^{1}$-boundedness of the principal eigenfunction $\Phi_{\e}$ in $\Omega$. 
Applying Rellich's Theorem, after passing to a subsequence, there exists $\widehat{\Phi} \in H^{1}(\Omega)$ such that $\Phi_{\e} \to \widehat{\Phi}$ strongly in $L^{2}(\Omega)$ and weakly in $H^{1}(\Omega)$. Moreover, for some nonnegative value $\widehat{\lambda}$ we also have $\lambda_{1}(\e) \to \widehat{\lambda}$ and $\widehat{\lambda} \leqslant \mu_{1}$.  If we let $\e \to 0$ in \eqref{limiteq1}, then 
\begin{equation}\label{limiteq2}
\int_{\Omega} \nabla \widehat{\Phi} \cdot \nabla \zeta dx + \alpha \intg \widehat{\Phi} \zeta \G = \widehat{\lambda} \int_{\Omega}\widehat{\Phi} \zeta dx. 
\end{equation} 
Thus $\widehat{\lambda}$ is a Robin eigenvalue and $\widehat{\Phi}$ is the corresponding Robin eigenfunction. It implies that $\mu_{1} \leqslant \widehat{\lambda}$. 
Therefore we obtain $\widehat{\lambda} = \mu_{1}$. Since $\mu_{1}$ is the principal eigenvalue, we have $\widehat{\Phi} = \pm w_{1}$. Also, since $\Phi_{\e}$ is chosen to be positive function, we get $\widehat{\Phi} = w_{1}$. By using the fact that $\Phi_{\e}$ converges to $\widehat{\Phi}$ strongly in $L^{2}(\Omega)$, we get the estimate $c_{1} = 1 + o(1)$ as $\e \to 0$.  
\end{proof}
From \eqref{above} and Lemma \ref{key lemma1} we have
\begin{equation}\label{lower bound}
\lambda_{1}(\e) \geqslant \mu_{1} - C\e.
\end{equation}
Combining the upper bound \eqref{upper bound} with the lower bound \eqref{lower bound}, we obtain
\begin{equation}\label{asymp}
\lambda_{1}(\e) = \mu_{1} + O(\e) \,\,\, \text{as} \,\,\, \e \to 0. 
\end{equation}

\section{Proof of Theorem \bf{\ref{thm1}}}\label{prth1}
First of all, we show the $L^{2}$ estimate for the tangential components of $\nabla \Phi_{\e}$. 
\begin{lem}\label{key lemma2}
The following estimate holds:
\begin{equation}\label{lemma2}
\sig \int_{\Sigma_{\e}} \abs{\nabla_{tan} \Phi_{\e}}^{2} dx = O(\e) \,\,\, \text{as} \,\,\, \e \to 0. 
\end{equation}
\end{lem}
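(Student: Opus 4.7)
The plan is to bound the tangential gradient of $\Phi_{\e}$ in $\Sigma_{\e}$ by lifting boundary information from $\Gamma$ into the layer via the fundamental theorem of calculus in the normal variable $\tau$, and then exploiting the two uniform estimates of Lemma~\ref{key lemma0}: the trace $\nabla_{\tan} \Phi_{\e}|_{\Gamma}$ will be controlled by the $H^{2}(\Omega)$-bound, while the second derivatives inside the layer will be controlled by the weighted $H^{2}$-bound on $\Sigma_{\e}$.

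Working in the local coordinates $(\xi, \tau)$, for each tangential index $i = 1, \ldots, n-1$ we write
\[
\frac{\pa \Phi_{\e}}{\pa \xi_{i}}(\xi, \tau) = \frac{\pa \Phi_{\e}}{\pa \xi_{i}}(\xi, 0) + \int_{0}^{\tau} \frac{\pa^{2} \Phi_{\e}}{\pa \tau \pa \xi_{i}}(\xi, s) \, ds,
\]
then square, sum over $i$, and apply Cauchy--Schwarz to obtain
\[
|\nabla_{\tan} \Phi_{\e}(\xi, \tau)|^{2} \leqslant 2 |\nabla_{\tan} \Phi_{\e}(\xi, 0)|^{2} + 2 \tau \int_{0}^{\tau} |\nabla_{\tan} \pa_{\tau} \Phi_{\e}(\xi, s)|^{2} \, ds.
\]
By \eqref{asymp1}, $g^{ij}(\xi, \tau) = g_{0}^{ij}(\xi) + O(\tau)$, so the intrinsic norm $|\nabla_{\tan} \Phi_{\e}|^{2} = g^{ij} \pa_{i} \Phi_{\e} \pa_{j} \Phi_{\e}$ and the coordinate norm $\sum_{i} |\pa_{i} \Phi_{\e}|^{2}$ agree up to multiplicative factors $1 + O(\e)$ on $\Sigma_{\e}$.

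Next we integrate this pointwise bound over $\Sigma_{\e}$ against $\Gtau$. The first term produces $O(\e) \int_{\Gamma} |\nabla_{\tan} \Phi_{\e}|^{2}|_{\tau=0} \G$, which is uniformly bounded by the trace theorem together with the $H^{2}(\Omega)$-estimate from Lemma~\ref{key lemma0} (since each first derivative $\pa_{i}\Phi_{\e} \in H^{1}(\Omega)$ uniformly, hence its trace on $\Gamma$ is uniformly bounded in $L^{2}(\Gamma)$). For the second term, reversing the order of integration yields the elementary bound $\int_{0}^{\e} \tau \int_{0}^{\tau} f(s) \, ds \, d\tau \leqslant (\e^{2}/2) \int_{0}^{\e} f(s) \, ds$, while the weighted $H^{2}$-estimate of Lemma~\ref{key lemma0} yields $\int_{\Sigma_{\e}} |\nabla_{\tan} \pa_{\tau} \Phi_{\e}|^{2} \, dx \leqslant \int_{\Sigma_{\e}} |D^{2} \Phi_{\e}|^{2} \, dx \leqslant C/\sig = O(1/\e)$. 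Combining gives $\int_{\Sigma_{\e}} |\nabla_{\tan} \Phi_{\e}|^{2} dx \leqslant C \e$, and multiplying by $\sig = \alpha \e$ we conclude $\sig \int_{\Sigma_{\e}} |\nabla_{\tan} \Phi_{\e}|^{2} dx = O(\e^{2}) = O(\e)$.

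The main technical point is only bookkeeping: verifying that the metric corrections from \eqref{asymp1} and the density $\sqrt{G_{\tau}}/\sqrt{G_{0}} = 1 - H(\xi)\tau + O(\tau^{2})$ introduce only multiplicative factors $1 + O(\e)$ on $\Sigma_{\e}$, which are absorbed into the constants. No delicate cancellation is involved; the estimate is essentially a Poincar\'e-type argument in the thin direction combined with the regularity already provided by Lemma~\ref{key lemma0}, and the gain of one power of $\e$ beyond the crude $H^{1}$-bound comes precisely from integrating across a layer of width $\e$ starting from controlled boundary data.
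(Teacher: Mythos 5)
Your proof is correct, but it follows a genuinely different route from the paper's. The paper's argument is variational: it inserts the orthogonal decomposition $|\nabla\Phi_{\e}|^{2}=|\nabla_{\tan}\Phi_{\e}|^{2}+|\partial_{\tau}\Phi_{\e}|^{2}$ into the Rayleigh quotient identity $\lambda_{1}(\e)=\int_{\Omega}|\nabla\Phi_{\e}|^{2}dx+\sigma_{\e}\int_{\Sigma_{\e}}|\nabla\Phi_{\e}|^{2}dx$, uses the Dirichlet boundary condition and \eqref{ineq1} to show that the normal-derivative energy dominates the Robin boundary term $\alpha\int_{\Gamma}\Phi_{\e}^{2}\sqrt{G_{0}}d\xi$ up to $O(\e)$, applies the min--max characterization of $\mu_{1}$ to conclude $\lambda_{1}(\e)\geqslant\mu_{1}+\sigma_{\e}\int_{\Sigma_{\e}}|\nabla_{\tan}\Phi_{\e}|^{2}dx-C\e$, and finally squeezes against the upper bound $\lambda_{1}(\e)\leqslant\mu_{1}+C\e$ from \eqref{upper bound}. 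Your argument instead lifts the trace of $\nabla_{\tan}\Phi_{\e}$ from $\Gamma$ (uniformly $L^{2}$-bounded thanks to the interior $H^{2}(\Omega)$-estimate, the trace theorem, and the continuity $\Phi_{1,\e}=\Phi_{2,\e}$ across $\Gamma$) into the layer via the fundamental theorem of calculus, and absorbs the remainder with the weighted $H^{2}$ bound in $\Sigma_{\e}$; it uses only Lemma~\ref{key lemma0} and never invokes the eigenvalue asymptotics of Section~\ref{firstasmp}. Two consequences worth noting: your method is more self-contained (it would work before establishing \eqref{asymp}, whereas the paper's proof needs the upper bound on $\lambda_{1}(\e)$), and it actually yields the sharper conclusion $\sigma_{\e}\int_{\Sigma_{\e}}|\nabla_{\tan}\Phi_{\e}|^{2}dx=O(\e^{2})$, one order better than the stated $O(\e)$. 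The only point requiring care — which you flag — is the bookkeeping between the curvilinear mixed derivative $\partial_{\tau}\partial_{\xi_{i}}\Phi_{\e}$ and the Cartesian Hessian $D^{2}\Phi_{\e}$ appearing in \eqref{H^{2}}: the change of coordinates produces lower-order terms proportional to $\nabla\Phi_{\e}$, but these are absorbed using the weighted $H^{1}$-estimate \eqref{H^{1}}, so the bound $\int_{\Sigma_{\e}}|\nabla_{\tan}\partial_{\tau}\Phi_{\e}|^{2}dx=O(1/\e)$ survives.
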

\begin{proof}
By using \eqref{norm} we have 
\begin{align*}
\lambda_{1}(\e) &= \int_{\Omega} \abs{\nabla \Phi_{\e}}^{2} dx + \sig \int_{\Sigma_{\e}} \abs{\nabla \Phi_{\e}}^{2} dx \\
&= \int_{\Omega} \abs{\nabla \Phi_{\e}}^{2} dx + \sig \int_{\Sigma_{\e}} \abs{\nabla_{tan} \Phi_{\e}}^{2} dx + \sig \int_{\Sigma_{\e}} \abs{\frac{\pa \Phi_{\e}}{\pa \nu_{\Gamma}}}^{2} dx. 
\end{align*}
Now we estimate $\sig \int_{\Sigma_{\e}} \abs{\frac{\pa \Phi_{\e}}{\pa \nu_{\Gamma}}}^{2} dx$. By using the estimate \eqref{ineq1}, we obtain
\begin{equation*}
\alpha \intg \Phi_{\e}^{2} \G \leqslant \sig \int_{\Sigma_{\e}} \abs{\frac{\pa \Phi_{\e}}{\pa \nu_{\Gamma}}}^{2} dx + C\e. 
\end{equation*}
Therefore, 
\begin{align*}
\lambda_{1}(\e) &= \int_{\Omega} \abs{\nabla \Phi_{\e}}^{2} dx + \sig \int_{\Sigma_{\e}} \abs{\nabla_{tan} \Phi_{\e}}^{2} dx + \sig \int_{\Sigma_{\e}} \abs{\frac{\pa \Phi_{\e}}{\pa \nu_{\Gamma}}}^{2} dx \\
&\geqslant \int_{\Omega} \abs{\nabla \Phi_{\e}}^{2} dx + \sig \int_{\Sigma_{\e}} \abs{\nabla_{tan} \Phi_{\e}}^{2} dx + \alpha \intg \Phi_{\e}^{2} \G - C\e \\
&\geqslant \mu_{1} + \sig \int_{\Sigma_{\e}} \abs{\nabla_{tan} \Phi_{\e}}^{2} dx - C\e. 
\end{align*}
From the asymptotic behavior \eqref{asymp}, we get the following estimate: 
\begin{equation*}
\sig \int_{\Sigma_{\e}} \abs{\nabla_{tan} \Phi_{\e}}^{2} dx = O(\e) \,\,\, \text{as} \,\,\, \e \to 0. 
\end{equation*}
\end{proof}
The estimate in the following lemma will be useful to examine the behavior for $\Phi_{\e}$ in $\Sigma_{\e}$. Recall that $\Phi_{1,\e}$ and $\Phi_{2,\e}$ denote the restriction of $\Phi_{\e}$ on $\Omega$ and $\Sigma_{\e}$, respectively.  
\begin{lem}\label{key lemma3}
The following estimate holds: 
\begin{equation}\label{lemma3}
\norm{\Phi_{1,\e} + \frac{1}{\alpha} \frac{\pa \Phi_{1,\e}}{\pa \nu_{\Gamma}}}^{2}_{L^{2}(\Gamma)} = O(\e) \,\,\, \text{as} \,\,\, \e \to 0. 
\end{equation}
\end{lem}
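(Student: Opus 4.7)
The quantity $\Phi_{1,\e}+\alpha^{-1}\pa\Phi_{1,\e}/\pa\nu_{\Gamma}$ measures the failure of the interior trace of the principal eigenfunction to satisfy the limit Robin condition on $\Gamma$, and my plan is to write it as a double integral in the normal variable $\tau$ and then control it with the weighted $H^{2}$-estimate \eqref{H^{2}} from Lemma~\ref{key lemma0}.

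First I would combine the two boundary identities available for $\Phi_{2,\e}$. Using $\sig=\alpha\e$, the transmission condition in \eqref{pb2} reads
\begin{equation*}
\frac{1}{\alpha}\frac{\pa\Phi_{1,\e}}{\pa\nu_{\Gamma}}(\xi) \;=\; \e\,\frac{\pa\Phi_{2,\e}}{\pa\tau}(\xi,0)
\;=\; \int_{0}^{\e}\frac{\pa\Phi_{2,\e}}{\pa\tau}(\xi,0)\,ds ,
\end{equation*}
while the Dirichlet condition $\Phi_{2,\e}(\xi,\e)=0$ and the fundamental theorem of calculus give
\begin{equation*}
\Phi_{1,\e}(\xi) \;=\; \Phi_{2,\e}(\xi,0) \;=\; -\int_{0}^{\e}\frac{\pa\Phi_{2,\e}}{\pa\tau}(\xi,s)\,ds.
\end{equation*}
Adding these and recognizing the difference as a second antiderivative yields the key pointwise identity
\begin{equation*}
\Phi_{1,\e}(\xi) + \frac{1}{\alpha}\frac{\pa\Phi_{1,\e}}{\pa\nu_{\Gamma}}(\xi)
\;=\; -\int_{0}^{\e}\!\!\int_{0}^{s}\frac{\pa^{2}\Phi_{2,\e}}{\pa\tau^{2}}(\xi,r)\,dr\,ds.
\end{equation*}

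Next, Fubini followed by Cauchy--Schwarz on the $r$-integral (using $0<r<s<\e$) upgrades this to the pointwise bound
\begin{equation*}
\left|\Phi_{1,\e}(\xi) + \frac{1}{\alpha}\frac{\pa\Phi_{1,\e}}{\pa\nu_{\Gamma}}(\xi)\right|^{2}
\;\leqslant\; C\e^{3}\int_{0}^{\e}\left|\frac{\pa^{2}\Phi_{2,\e}}{\pa\tau^{2}}(\xi,r)\right|^{2}dr.
\end{equation*}
I would integrate this over $\Gamma$ against $\sqrt{G_{0}}\,d\xi$ and then pass to a volume integral on $\Sigma_{\e}$, using that by \eqref{formula0} the density $\sqrt{G_{\tau}}$ is uniformly bounded above and below on $\Sigma_{\e}$ for small $\e$. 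This produces
\begin{equation*}
\left\|\Phi_{1,\e} + \frac{1}{\alpha}\frac{\pa\Phi_{1,\e}}{\pa\nu_{\Gamma}}\right\|_{L^{2}(\Gamma)}^{2}
\;\leqslant\; C\e^{3}\int_{\Sigma_{\e}}\left|\frac{\pa^{2}\Phi_{2,\e}}{\pa\tau^{2}}\right|^{2}dx
\;\leqslant\; C\e^{3}\int_{\Sigma_{\e}}|D^{2}\Phi_{\e}|^{2}\,dx.
\end{equation*}

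Finally I would invoke the $H^{2}$-estimate \eqref{H^{2}}. Since it carries the weight $\sig=\alpha\e$, it gives $\int_{\Sigma_{\e}}|D^{2}\Phi_{\e}|^{2}\,dx\leqslant C/\e$, and substituting this above yields a bound of order $\e^{3}\cdot\e^{-1}=\e^{2}$, which is strictly stronger than the claimed $O(\e)$. The only bookkeeping point to be careful about is precisely this loss of one power of $\e$ coming from the weighted $H^{2}$-estimate; it is exactly compensated by the $\e^{3}$ prefactor generated by the double normal integration, and I do not foresee any further obstacle beyond this.
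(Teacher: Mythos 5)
Your proposal is correct and follows essentially the same route as the paper: write the Robin defect on $\Gamma$ as a double normal integral of $\pa^2_\tau\Phi_{2,\e}$ via the Dirichlet and transmission conditions, apply Cauchy--Schwarz, and close with the weighted $H^2$-estimate \eqref{H^{2}}. You in fact track the powers of $\e$ a bit more carefully than the paper and arrive at the sharper $O(\e^{2})$, which of course still gives the claimed $O(\e)$.
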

\begin{proof}
For any $k \in (0,\e)$, 
\begin{equation*}
\frac{\pa \Phi_{2,\e}}{\pa \tau}(\xi,k) - \frac{\pa \Phi_{2,\e}}{\pa \tau}(\xi,0) = \int^{k}_{0} \frac{\pa^{2} \Phi}{\pa \tau^{2}} ds.  
\end{equation*}
By integrating from $0$ to $\tau$ we have 
\begin{equation}\label{formula}
\Phi_{2,\e}(\xi, \tau) = \Phi_{2,\e}(\xi,0) + \tau \frac{\pa \Phi_{2,\e}}{\pa \tau}(\xi,0) + \int^{\tau}_{0} dk \int^{k}_{0} \frac{\pa^{2} \Phi_{2,\e}}{\pa \tau^{2}} ds. 
\end{equation}
Due to the Dirichlet boundary condition on $\pa \Omega_{\e}$, we obtain 
\begin{equation*}
\Phi_{2,\e}(\xi,0) + \e \frac{\pa \Phi_{2,\e}}{\pa \nu_{\Gamma}}(\xi,0) = - \int^{\e}_{0} \int^{k}_{0} \frac{\pa^{2} \Phi_{2,\e}}{\pa \tau^{2}} ds dk. 
\end{equation*}
Integrating on $\Gamma$ and using transmission condition, we have 
\begin{equation*}
\intg \abs{\Phi_{1,\e}(\xi,0) + \frac{1}{\alpha} \frac{\pa \Phi_{1,\e}}{\pa \nu_{\Gamma}}(\xi,0)}^{2} \G \leqslant C \e \left( \sig \int_{\Sigma_{\e}} \abs{D^{2} \Phi_{2,\e}}^{2} dx \right). 
\end{equation*}
Therefore from Lemma \ref{key lemma0} we obtain 
\begin{equation*}
\norm{\Phi_{1,\e} + \frac{1}{\alpha} \frac{\pa \Phi_{1,\e}}{\pa \nu_{\Gamma}}}^{2}_{L^{2}(\Gamma)} = O(\e) \,\,\, \text{as} \,\,\, \e \to 0. 
\end{equation*}
\end{proof}

We are now going to derive a more precise asymptotic behavior for the principal eigenvalue $\lambda_{1}(\e)$. Recall that
\begin{align*}
&\int_{\Omega} \nabla \Phi_{\e} \cdot \nabla \zeta \, dx + \sig \int_{\Sigma_{\e}} \phi \nabla \Phi_{\e} \cdot \nabla \zeta \, dx + \sig \int_{\Sigma_{\e}} \zeta \nabla \Phi_{\e} \cdot \nabla \phi \, dx \\
&\qquad \qquad \qquad \qquad \qquad \qquad \qquad = \lambda_{1}(\e) \int_{\Omega} \Phi_{\e} \zeta \, dx + \lambda_{1}(\e) \int_{\Sigma_{\e}} \Phi_{\e} \phi \zeta \, dx. 
\end{align*}
Noting that $\zeta$ only depends on $\xi$ in $\Sigma_{\e}$ and using Cauchy--Schwarz's inequality, we obtain 
\begin{align}
\abs{\sig \int_{\Sigma_{\e}} \phi \nabla \Phi_{\e} \cdot \nabla \zeta \, dx} &= \abs{\sig \int_{\Sigma_{\e}} \phi \nabla_{\tan} \Phi_{\e} \cdot \nabla_{\tan} \zeta \, dx} \notag \\
&\leqslant \left( \sig \int_{\Sigma_{\e}} \abs{\nabla_{\tan} \Phi_{\e}}^{2} dx\right)^{\frac{1}{2}} \left( \sig \int_{\Sigma_{\e}} \abs{\nabla_{\tan} \zeta}^{2} dx\right)^{\frac{1}{2}} = O(\e^{\frac{3}{2}}). \label{est1}
\end{align}
Moreover, 
\begin{align*}
\sig \int_{\Sigma_{\e}} \zeta \nabla \Phi_{\e} \cdot \nabla \phi \, dx &= \sig \int_{\Sigma_{\e}} \zeta \nabla \Phi_{\e} \cdot \left( - \frac{\nu_{\Gamma}}{\e} \right) \, dx \\
&=  - \alpha \intp \zeta \frac{\pa \Phi_{\e}}{\pa \tau} \Gtau \\
&= - \alpha \intp \zeta \frac{\pa \Phi_{\e}}{\pa \tau} \left(1 - H(\xi) \tau \right) \Gt + O(\e^{2}) \\
&= \alpha \intg \Phi_{\e} \zeta \G + \alpha \intp H(\xi) \zeta \tau \frac{\pa \Phi_{\e}}{\pa \tau} \Gt + O(\e^{2}). \\
\end{align*}
Using integration by parts and \eqref{formula}, we have
\begin{equation*}
\alpha \intp H(\xi) \zeta \tau \frac{\pa \Phi_{\e}}{\pa \tau} \Gt = - \alpha \e \intg  H \Phi_{\e} \zeta \G + O(\e^{2}). 
\end{equation*}
Thus we obtain 
\begin{equation}\label{est2}
\sig \int_{\Sigma_{\e}} \zeta \nabla \Phi_{\e} \cdot \nabla \phi \, dx = \alpha \intg \Phi_{\e} \zeta \G - \alpha \e \intg  H \Phi_{\e} \zeta \G + O(\e^{2}). 
\end{equation}
Furthermore, 
\begin{align*}
\int_{\Sigma_{\e}} \Phi_{\e} \phi \zeta \, dx &= \intp \Phi_{\e} \phi \zeta \Gtau \\
&= \intp \zeta \left( \Phi_{\e}(\xi,0) + \tau \frac{\pa \Phi_{\e}}{\pa \tau}(\xi,0) 
+ \int^{\tau}_{0} dk \int^{k}_{0} \frac{\pa^{2} \Phi_{\e}}{\pa \tau^{2}} ds \right) \\
&\qquad \qquad \qquad \times \left( 1 - \frac{\tau}{\e} \right) \times \left( 1 - H(\xi) \tau + O(\tau^{2}) \right) \Gt. 
\end{align*}
By direct computation, we have 
\begin{align*}
\int^{\e}_{0} \left( 1 - \frac{\tau}{\e} \right) d\tau = \frac{\e}{2}, \,\,
\int^{\e}_{0} \tau \left( 1 - \frac{\tau}{\e} \right) d\tau = \frac{\e^{2}}{6}, \,\,
\int^{\e}_{0} \tau^{2} \left( 1 - \frac{\tau}{\e} \right) d\tau = \frac{\e^{3}}{12}.   
\end{align*}
Thus we obtain 
\begin{align*}
&\intp \zeta \left( \Phi_{\e}(\xi,0) + \tau \frac{\pa \Phi_{\e}}{\pa \tau}(\xi,0) 
+ \int^{\tau}_{0} dk \int^{k}_{0} \frac{\pa^{2} \Phi_{\e}}{\pa \tau^{2}} ds \right) \\
&\qquad \qquad \qquad \times \left( 1 - \frac{\tau}{\e} \right) \times \left( 1 - H(\xi) \tau + O(\tau^{2}) \right) \Gt \\
&= \frac{\e}{2} \intg \zeta \Phi_{2,\e}(\xi,0) \G + \frac{\e^{2}}{6} \intg \zeta \frac{\pa \Phi_{2,\e}}{\pa \nu_{\Gamma}}(\xi,0) \G + O(\e^{2}) \\
&= \frac{\e}{2} \intg \zeta \Phi_{1,\e}(\xi,0) \G + \frac{\e}{6\alpha} \intg \zeta \frac{\pa \Phi_{1,\e}}{\pa \nu_{\Gamma}}(\xi,0) \G + O(\e^{2}), 
\end{align*}
where we used the transmission condition. 
From Lemma \ref{key lemma3} we have 
\begin{align*}
&\frac{\e}{2} \intg \zeta \Phi_{1,\e}(\xi,0) \G + \frac{\e}{6\alpha} \intg \zeta \frac{\pa \Phi_{1,\e}}{\pa \nu_{\Gamma}}(\xi,0) \G + O(\e^{2}) \\
&= \frac{\e}{2} \intg \zeta \Phi_{1,\e}(\xi,0) \G - \frac{\e}{6} \intg \zeta \Phi_{1,\e}(\xi,0) \G + O(\e^{\frac{3}{2}}) \\
&= \frac{\e}{3} \intg \zeta \Phi_{1,\e}(\xi,0) \G + O(\e^{\frac{3}{2}}). 
\end{align*}
Therefore
\begin{equation*}
\int_{\Sigma_{\e}} \Phi_{\e} \phi \zeta \, dx = \frac{\e}{3} \intg \zeta \Phi_{1,\e}(\xi,0) \G + O(\e^{\frac{3}{2}}).
\end{equation*}
By using the above and the asymptotic behavior \eqref{asymp}, we obtain the following estimate: 
\begin{equation}\label{est3}
\lambda_{1}(\e) \int_{\Sigma_{\e}} \Phi_{\e} \phi \zeta \, dx = \e \left( \frac{\mu_{1}}{3} \intg \zeta \Phi_{1,\e}(\xi,0) \G \right) + O(\e^{\frac{3}{2}}). 
\end{equation}
Combining \eqref{est1}, \eqref{est2}, and \eqref{est3} we have 
\begin{align*}
&\int_{\Omega} \nabla \Phi_{\e} \cdot \nabla \zeta \, dx + \alpha \intg \Phi_{\e} \zeta \G - \alpha \e \intg H \Phi_{\e} \zeta \G \\
&\qquad \qquad \qquad \qquad \qquad \qquad \qquad = \lambda_{1}(\e) \int_{\Omega} \Phi_{\e} \zeta \, dx + \e \left( \frac{\mu_{1}}{3} \intg \Phi_{\e} \zeta \G \right) + O(\e^{\frac{3}{2}}). 
\end{align*}
By using the fact that $\norm{\Phi_{\e} - w_{1}}_{L^{2}(\Gamma)} \to 0$ as $\e \to 0$, we obtain 
\begin{equation*}
\intg \Phi_{\e} \zeta \G = \intg w_{1} \zeta \G + o(1) \,\,\, \text{as} \,\,\, \e \to 0. 
\end{equation*}
Taking $\zeta = c_{1} w_{1}$, we get 
\begin{equation*}
(c_{1})^{2} \mu - \alpha \e \intg H c_{1} w_{1}^{2} \G = \lambda_{1}(\e) (c_{1})^{2} + \e \left( \frac{\mu_{1}}{3} \intg c_{1} w_{1}^{2} \G \right) + o(\e). 
\end{equation*}
Dividing by $(c_{1})^{2}$ and using Lemma \ref{key lemma1}, we finally obtain the following more precise asymptotic behavior for $\lambda_{1}(\e)$: 
\begin{equation*}
\lambda_{1}(\e) = \mu_{1} - \e \intg \left( \alpha H +\frac{\mu_{1}}{3} \right) w_{1}^{2} \G + o(\e) \,\,\, \text{as} \,\,\, \e \to 0. 
\end{equation*}
The proof of Theorem \ref{thm1} is complete. 

\vspace{1.0\baselineskip}
{\bf Acknowledgments.} 
The author would like to thank Professor Shigeru Sakaguchi (Tohoku University) for many stimulating discussions. Also the author would like to thank Lorenzo Cavallina (Tohoku University) for his warm encouragement.

\end{document}